\newtheorem{theorem}{Theorem}[section]
\newtheorem{lemma}[theorem]{Lemma}
\newtheorem{proposition}[theorem]{Proposition}
\newtheorem{corollary}[theorem]{Corollary}
\theoremstyle{definition}
\newtheorem{definition}[theorem]{Definition}
\numberwithin{equation}{section}
\newtheorem{remark}[theorem]{Remark}
\newcommand{\Pb}{\mathbb{P}}
\newcommand{\ch}{{\rm ch}}
\newcommand{\Par}{\mathcal{P}}
\newcommand{\Sg}{\mathbb{S}}
\title{On the cohomology of the Losev--Manin moduli space} 
\author{Jonas Bergstr\"om}
\address{Matematiska institutionen, Stockholms Universitet, 106 91 Stockholm, Sweden.}
\email{jonasb@math.su.se}
\author{Satoshi Minabe}
\address{Department of Mathematics, Tokyo Denki University, 120-8551 Tokyo,  Japan}
\email{minabe@mail.dendai.ac.jp}
\subjclass[2000]{Primary 14H10;  Secondary 14M25}
\begin{document}
\begin{abstract}
We determine the cohomology of the Losev--Manin moduli space $\overline{M}_{0, 2 | n}$ of pointed genus zero curves as a representation of the product of symmetric groups $\Sg_2 \times \Sg_n$. 
\end{abstract}
\maketitle
\setlength{\baselineskip}{1.1\baselineskip}

\section*{Introduction}
The Losev--Manin moduli space $\overline{M}_{0, 2 | n}$ was introduced in \cite{LM} and it parametrizes stable chains of projective lines with marked points $x_0 \neq x_{\infty}$ and $y_1,\ldots,y_n$, where the points $y_1,\ldots,y_n$ are allowed to collide, but not with $x_0$ nor $x_{\infty}$, see Definition \ref{def:definition}. In \cite{LM} this moduli space was denoted by $\overline{L}_{n}$, here we have adapted the notation used in \cite{MOP}. There is a natural action of $\Sg_2 \times \Sg_n$  on $\overline{M}_{0, 2 | n}$ by permuting $x_0,x_{\infty}$ and $y_1,\ldots,y_n$ respectively. This makes the cohomology $H^*(\overline{M}_{0, 2 | n},\mathbb{Q})$ into a representation of $\Sg_2 \times \Sg_n$. The aim of this note is to determine the character of this representation. 

The moduli space $\overline{M}_{0, 2 | n}$ can also be described as a moduli space of weighted pointed curves which were studied by Hassett \cite[Section 6.4]{Hassett}. In this terminology it is the moduli space of genus~$0$ curves with $2$ points of weight $1$ and $n$ points of weight $1/n$, and it would be written $\overline{M}_{0,\mathcal{A}}$ where $\mathcal{A}=(1,1,\underbrace{1/n,\ldots,1/n}_n)$. 

Another interesting aspect of the space $\overline{M}_{0, 2 | n}$ is that it has a structure of toric variety. It is proved in \cite{LM} that $\overline{M}_{0, 2 | n}$ is isomorphic to the smooth projective toric variety associated with the convex polytope called the permutahedron. This toric variety is obtained by an iterated blow-up of $\Pb^{n-1}$ formed by first blowing up $n$ general points, then blowing up the strict transforms of the lines joining pairs among the original $n$ points, and so on up to $(n-3)$-dimensional hyperplanes, see \cite[\S 4.3]{Kap}. With this perspective, the action of $\Sg_2 \times \Sg_n$ can be seen in the following way. The $\Sg_n$-action comes from permuting the $n$-points of the blow-up, and the action of $\Sg_2$ comes from the Cremona transform of $\Pb^{n-1}$ induced by the group inversion of the torus $(\mathbb{C}^*)^{n-1}$ : $(t_1, \ldots, t_{n-1})\mapsto (t_1^{-1}, \ldots, t_{n-1}^{-1})$. 

Alternatively, we can view our moduli space $\overline{M}_{0, 2 | n}$ as the toric variety $X(A_{n-1})$ associated to the fan formed by Weyl chambers of the root system of type $A_{n-1}$ ($n\geq 2$), see \cite{Batyrev-Blume}. The cohomology of $X(A_{n-1})$ is a representation of the Weyl group $W(A_{n-1})\cong \Sg_n$ and this representation was studied in \cite{P,DV,Ste, Le}. On the other hand, $X(A_{n-1})$ has another automorphism coming from that of the Dynkin diagram. This automorphism together with the action of the Weyl group corresponds precisely to the $\Sg_2 \times \Sg_n$-action on $\overline{M}_{0, 2 | n}$. 

The cohomology of the moduli space $\overline{M}_{0, 2 | n}$ has also been studied by mathematical physicists, since it corresponds to the solutions of the so-called commutativity equations. For this perspective we refer to \cite{LM, SZ} and the references therein.

The outline of the paper is as follows. In Section~\ref{sec:moduli} we define $\overline{M}_{0, 2 | n}$ and we state some known results on its cohomology. Our main result is Theorem~\ref{thm:formula} where we give a formula for the $\Sg_2 \times \Sg_n$-equivariant Poincar\'e-Serre polynomial of $\overline{M}_{0, 2 | n}$.  The main theorem is formulated in Section~\ref{sec:result} and it is proved in Section~\ref{sec:proof}. In Section~\ref{sec:gener} we present a formula for the generating series of the $\Sg_2 \times \Sg_n$-equivariant Poincar\'e-Serre polynomial of $\overline{M}_{0, 2 | n}$. In Appendix~\ref{sec:procesi} we then show that the result of Procesi in \cite{P} on the $\Sg_n$-equivariant Poincar\'e-Serre polynomial is in agreement with our result. Finally in Appendix~\ref{sec:appendix} we list the  $\Sg_2 \times \Sg_n$-equivariant Poincar\'e-Serre polynomial of $\overline{M}_{0, 2 | n}$ for $n$ up to $6$.

\subsection*{Acknowledgement} The authors thank the Max--Planck--Institut f\"ur Mathematik for hospitality during the preparation of this note. 
The second named author is supported in part by JSPS Grant-in-Aid for Young Scientists (No.\,22840041).

\section{The moduli space $\overline{M}_{0, 2 | n}$} \label{sec:moduli}
In this note, a curve means a compact and connected curve over $\mathbb{C}$ with at most nodal singularities and the genus of a curve is the arithmetic genus.

\begin{definition}\label{def:definition}
For $n\geq 1$, let $\overline{M}_{0, 2 | n}$ be the moduli space of genus $0$ curves $C$ with $n+2$ marked points $(x_0, x_{\infty} | y_1, \ldots, y_{n})$ satisfying the following conditions:
\begin{itemize}
\item[(i) ] all the marked points are non-singular points of $C$,
\item[(ii)] $x_0$ and $x_{\infty}$ are distinct,
\item[(iii)] $y_1, \ldots, y_{n}$ are distinct from $x_0$ and  $x_{\infty}$,
\item[(iv)] the components corresponding to the ends of the dual graph contain $x_0$ or $x_{\infty}$,
\item[(v)] each component has at least three special (i.e. marked or singular) points.
\end{itemize} 
\end{definition}

\begin{remark}
In (iii) above, $y_i$ and $y_j$ are allowed to coincide. The conditions imply that the dual graph of $C$ is linear and that each irreducible component must contain at least one marked point in $(y_1, \ldots, y_n)$. This means that $C$ is a chain of projective lines of length at most $n$.
\end{remark}

The moduli space $\overline{M}_{0, 2 | n}$ is a nonsingular projective variety of dimension $n-1$, see \cite[Theorem 2.2]{LM}. It has an action of $\Sg_2 \times \Sg_n$ by permuting the marked points $(x_0,x_{\infty}|y_1, \ldots, y_n)$. 

\subsection{Cohomology of $\overline{M}_{0, 2 | n}$}
The cohomology ring $H^*(\overline{M}_{0, 2 | n},\mathbb{Q})$ was studied in \cite{LM}. It is algebraic, i.e., all the odd cohomology groups are zero and $H^*(\overline{M}_{0, 2 | n},\mathbb{Q})$ is isomorphic to the Chow ring $A^*(\overline{M}_{0, 2 | n},\mathbb{Q})$, see \cite[Theorem 2.7.1]{LM}. The Poincar\'{e}-Serre polynomials 
\begin{equation*}
E_{2|n}(q)=\sum_{i=0}^{n-1} 
\dim_{\mathbb{Q}} H^{2i} (\overline{M}_{0, 2 | n},\mathbb{Q}) \, q^i \in \mathbb{Z}[q]\, , 
\end{equation*}
were also computed, see \cite[Theorem 2.3]{LM}. 

The action of $\Sg_2 \times \Sg_n$ on $\overline{M}_{0, 2 | n}$ gives the cohomology $H^*(\overline{M}_{0, 2 | n},\mathbb{Q})$ a structure of $\Sg_2 \times \Sg_n$ representation. In \cite{P}, Procesi computed the $\Sg_n$-equivariant Poincar\'e-Serre polynomial of the toric variety $X(A_{n-1})$ (which is isomorphic to $\overline{M}_{0, 2 | n}$), see Appendix~\ref{sec:procesi}. 

Throughout this note the coefficients of all cohomology groups will be $\mathbb{Q}$.

\section{Statement of the result} \label{sec:result}
\subsection{Partitions}
A partition $\lambda=(\lambda_1 \geq \lambda_2 \geq \cdots)$ is a non-incresing sequence of non-negative integers which contains only finitely many non-zero $\lambda_i$'s.  The number $l(\lambda)$ of positive entries is called the \emph{length} of $\lambda$. The number $|\lambda|:=\sum_{i} \lambda_i$ is called the \emph{weight} of $\lambda$. If $|\lambda |=n$ we say that $\lambda$ is a partition of $n$. We denote by $\Par(n)$ the set of partitions of $n$ and by $\Par$ the set of all partitions. A sequence
$$ 
w\cdot \lambda
=(\lambda_{w(1)}, \lambda_{w(2)}, \ldots)\, ,\quad w \in \Sg_{l(\lambda)}~,
$$
obtained by permuting the non-zero elements of $\lambda$ is called an ordered partition of $n$. The number $c_{\lambda}$ of distinct ordered partitions obtained from 
$\lambda$ is given by 
$$
c_{\lambda}
=
\frac{l(\lambda)!}{\#{\rm Aut}(\lambda)}~,
$$ 
where ${\rm Aut}(\lambda)$ is the subgroup of $\Sg_{l(\lambda)}$ 
consisting of the permutations which preserve $\lambda$. 
Let $m_k(\lambda):=\#\{ i \mid \lambda_i =k\}$, we then have 
$$\#{\rm Aut}(\lambda)=
\prod_{k \geq 1} (m_k(\lambda)!)~.
$$
With this notation a partition $\lambda$ can also be written as $\lambda=[1^{m_1(\lambda)}\, 2^{m_2(\lambda)}\, \cdots]$. For $\lambda \in \Par(n)$ and $\mu \in \Par(m)$ we then define $\lambda+\mu \in \Par(m+n)$ by $m_k(\lambda+\mu):=\#\{ i \mid \lambda_i =k\}+\#\{ i \mid \mu_i =k\}$. 

\subsection{Symmetric functions}
For proofs of the statements in this section see for instance~\cite{Mac}. Let $\Lambda^y:=\underset{\longleftarrow}{\lim}~\mathbb{Z}[y_1, \ldots, y_n]^{\Sg_n}$ be the ring of symmetric functions. Similarly we define $\Lambda^{x|y}:=\Lambda^x\otimes \Lambda^y$. It is known that $\Lambda^y \otimes \mathbb{Q}=\mathbb{Q}[p_1^y, p_2^y, \ldots]$ where $p_n^y$ are the power sums in the variable $y$. For $\lambda\in \Par$, we set $p_{\lambda}^y:=\prod_i p_{\lambda_i}^y$. 

For a representation $V$ of $\Sg_n$, we define 
\begin{equation*}
{\rm ch}^y_n(V)
:=\frac{1}{n!} \sum_{w \in \Sg_n}
{\rm Tr}_V(w) p_{\rho(w)}^y \in \Lambda^y \, ,
\end{equation*}
where $\rho(w)\in \Par(n)$ is the partition of $n$ which represents the cycle type of $w\in \Sg_n$. Similarly we define, for a $\Sg_2 \times \Sg_n$ representation $V$, 
\begin{equation*}
{\rm ch}^{x|y}_{2|n}(V)
:=\frac{1}{2(n!)} \sum_{(v,w) \in \Sg_2 \times \Sg_n}
{\rm Tr}_V\bigl((v,w)\bigr) p_{\rho(v)}^x \, p_{\rho(w)}^y \in \Lambda^{x|y} \, . 
\end{equation*}

Recall that irreducible representations of $\Sg_n$ are indexed by $\Par(n)$. For $\lambda \in \Par(n)$, let $V_{\lambda}$ be the irreducible representation corresponding to $\lambda$ and define the Schur polynomial 
$$
s_{\lambda}^y:={\rm ch}^y_n(V_{\lambda})\in \Lambda^y \, .
$$

In the following we will use that, if $V_i$ are representations of $\Sg_{n_i}$ for $1 \leq i \leq k$, then  
\begin{gather*}
\ch^y_{\sum_{i=1}^k n_i} \Bigl({\rm Ind}_{\Sg_{n_1} \times \ldots \times \Sg_{n_k}}^{\Sg_{\sum_{i=1}^k n_i}} (V_1 \boxtimes \ldots \boxtimes V_k) \Bigr) = \prod_{i=1}^k \ch^y_{n_i}(V_i), \\
\ch^y_{n_1 n_2}\Bigl({\rm Ind}_{\Sg_{n_1} \sim \, \Sg_{n_2}}^{\Sg_{n_1 n_2}} (V_1 \boxtimes \underbrace{ V_2 \boxtimes \ldots \boxtimes V_2 }_{n_1} \,) \Bigr) = \ch^y_{n_1}(V_1) \circ \ch^y_{n_2}(V_2),
\end{gather*}
where $\sim$ denotes the wreath product, that is,  $\Sg_{n_1} \sim\,  \Sg_{n_2}:=\Sg_{n_1} \ltimes (\Sg_{n_2})^{n_1}$ where $\Sg_{n_1}$ acts on $(\Sg_{n_2})^{n_1}$ by permutation, see \cite[Appendix A, p. 158]{Mac}. Plethysm is an operation $\circ: \Lambda^y \times \Lambda^y \rightarrow \Lambda^y$ which we will extend to an operation $\circ: \Lambda^y \times \Lambda^y[q] \rightarrow \Lambda^y[q]$ by putting $p_n^y \circ q=q^n$.

\subsection{The main theorem}
\begin{definition}
The $\Sg_2 \times \Sg_n$-equivariant Poincar\'{e}-Serre polynomial of 
$\overline{M}_{0, 2 | n}$ is defined by 
\begin{equation*}
E_{\Sg_2 \times \Sg_n}(q):=\sum_{i=0}^{n-1}
{\rm ch}^{x|y}_{2|n}\left(H^{2i}(\overline{M}_{0, 2 | n})\right) q^{i}\in \Lambda^{x|y} [q]~.
\end{equation*}
\end{definition}

The usual Poincar\'e-Serre polynomial $E_{2|n}(q)$ is recovered from the equivariant one by 
$$
\frac{\partial^2}{\partial (p_1^x)^2}\frac{\partial^n}{\partial (p_1^y)^n} \, E_{\Sg_2 \times \Sg_n}(q) 
= E_{2|n}(q)\, .
$$

We will make some ad-hoc definitions in order to formulate an explicit formula for $E_{\Sg_2 \times \Sg_n}(q)$. The proof will then furnish an explanation to these definitions. 

\begin{definition}
First put $g_0^y:=1$, then for any $n \geq 1$ and any (unordered) partition $\lambda$ put  
\begin{equation*} 
f^{y}_{n}:=\sum_{i=0}^{n-1} (-1)^{i} s_{(n-i, 1^{i})}^y q^{n-1-i}, \quad F_{\lambda}^y:=\prod_{j=1}^{l(\lambda)} f^{y}_{\lambda_ j},\quad  g^{y}_{n}:=\sum_{i=0}^{n-1} s_{(n-i, 1^{i})}^y q^{n-1-i}. 
\end{equation*}
\end{definition} 

\begin{theorem}\label{thm:formula}
We then have 
\begin{equation}\label{eq:formula}
E_{\Sg_2 \times \Sg_n}(q)=
\frac{1}{2} (p_{1}^x)^2  \sum_{\lambda\in \Par(n) } c_{\lambda} \, F_{\lambda}^y + \frac{1}{2} p_{2}^x \sum_{k=0}^{\lfloor n/2 \rfloor} g^{y}_{n-2k} \sum_{\mu \in \Par(k)} c_{\mu} \bigl(p_{2}^y \circ F_{\mu}^y \bigr)~.
\end{equation} 
\end{theorem}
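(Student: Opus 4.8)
The plan is to compute the $\Sg_2\times\Sg_n$-equivariant Serre characteristic $e_{\Sg_2\times\Sg_n}(\overline{M}_{0,2|n}):=\sum_k(-1)^k[H^k_c(\overline{M}_{0,2|n})]$, an element of the Grothendieck group of $\Sg_2\times\Sg_n$-equivariant mixed Tate Hodge structures, and then to apply the characteristic map. Since $H^*(\overline{M}_{0,2|n})$ is algebraic, of Tate type and concentrated in even degrees (recalled from \cite{LM}), and $\overline{M}_{0,2|n}$ is proper, we have $H^k_c=H^k$ and $H^{2i}(\overline{M}_{0,2|n})$ is a $\Sg_2\times\Sg_n$-representation twisted by $\mathbb{Q}(-i)$; identifying $\mathbb{Q}(-1)$ with $q$ gives $\ch^{x|y}_{2|n}(e_{\Sg_2\times\Sg_n}(\overline{M}_{0,2|n}))=E_{\Sg_2\times\Sg_n}(q)$. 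I would compute $e_{\Sg_2\times\Sg_n}$ through the stratification of $\overline{M}_{0,2|n}$ by topological type of the curve. A stratum $M_\pi$ is indexed by an ordered set partition $\pi=(S_1,\dots,S_r)$ of $\{1,\dots,n\}$, recording the distribution of the marks $y_i$ along the chain of projective lines, and one checks $M_\pi\cong\prod_{j=1}^{r}T_{|S_j|}$, where $T_m:=(\mathbb{C}^*)^m/\mathbb{C}^*$ (diagonal $\mathbb{C}^*$). On $M_\pi$ the group $\mathrm{Stab}_{\Sg_n}(\pi)=\prod_j\Sg_{S_j}$ acts on the factors by coordinate permutations, while the generator of $\Sg_2$ reverses the chain, sending $M_\pi$ to $M_{(S_r,\dots,S_1)}$ and acting on each torus factor $T_{|S_j|}$ by the coordinatewise inversion $t\mapsto t^{-1}$. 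By additivity of the equivariant Serre characteristic over this ($\Sg_2\times\Sg_n$-permuted) stratification, together with the standard fact that in the Lefschetz trace of an element $(v,w)$ only the strata fixed by $(v,w)$ contribute, the whole computation reduces to two torus building blocks and to bookkeeping over orbits of ordered set partitions, i.e.\ over compositions of $n$ up to reversal.

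The building blocks: $H^k(T_m)=\wedge^k H^1(T_m)$, and $H^1(T_m)$ is the standard representation of $\Sg_m$ twisted by $\mathbb{Q}(-1)$ (the primitive part of $H^1((\mathbb{C}^*)^m)$); since the exterior powers of the standard representation are the hook representations, $H^k(T_m)=V_{(m-k,1^k)}\otimes\mathbb{Q}(-k)$, whence, by Poincar\'e duality, $\ch^y_m(e_{\Sg_m}(T_m))=f^y_m$. The inversion acts on $H^1(T_m)$, hence on $H^k(T_m)$, by $(-1)^k$ (and trivially on the top compactly supported class), so the analogous computation with the inversion included — the "$\sigma$-twisted" block — gives the sign-free series $g^y_m$. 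This already handles the first summand: specialising the Lefschetz trace to $v=e$, summing $\prod_j f^y_{a_j}$ over all compositions $(a_1,\dots,a_r)$ of $n$ and using multiplicativity of $\ch^y$ under induction from Young subgroups, one obtains $\frac{1}{2}(p_1^x)^2\sum_{\lambda\in\Par(n)}c_\lambda F^y_\lambda$.

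For $v=\sigma$, a stratum $M_\pi$ can be $(\sigma,w)$-fixed for some $w$ only when the composition of $\pi$ is palindromic; writing it as $(a_1,\dots,a_s,m,a_s,\dots,a_1)$ with $m=n-2k$ and $k=a_1+\dots+a_s$ (the case $m=0$ being allowed, which is why one sets $g^y_0:=1$), the relevant stabiliser has the middle factor $T_m$ carrying the inversion and the $s$ pairs $T_{a_j}\times T_{a_j}$ being interchanged, with an inversion twist. The middle factor contributes $g^y_m$ by the second building block. For a swapped pair, the Lefschetz trace of "the swap composed with coordinate permutations of the two factors" on $H^\bullet_c(T_{a_j})^{\otimes 2}$ is computed from the graded (Koszul) symmetric square; summing over the two copies of $\Sg_{a_j}$ while recording the induced $\Sg_{2a_j}$-cycle type — which doubles the cycle type of the product of the two permutations — yields exactly the plethysm $p_2^y\circ f^y_{a_j}$, the inversion twist contributing a factor $\tau^2=1$ and dropping out. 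As plethysm by $p_2^y$ is multiplicative, the $s$ pairs together contribute $p_2^y\circ F^y_\mu$ with $\mu$ the partition underlying $(a_1,\dots,a_s)$; summing over compositions of $k$ and then over $k$ gives $\frac{1}{2}p_2^x\sum_k g^y_{n-2k}\sum_{\mu\in\Par(k)}c_\mu(p_2^y\circ F^y_\mu)$. Adding the two parts gives \eqref{eq:formula}.

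The main obstacle I anticipate is the sign bookkeeping in the last step: identifying precisely how chain reversal acts on each torus factor of $M_\pi$ — in particular the coordinatewise inversions — and then tracking the Koszul signs in the graded symmetric square of $H^\bullet_c(T_{a_j})$, since it is exactly these signs that force plethysm by $p_2^y$ rather than by $h_2^y$ to appear in the second summand. The additivity/Lefschetz formalism over the stratification and the reduction to Young-subgroup and wreath-product inductions are routine by comparison; identifying $M_\pi$ and the $\Sg_2\times\Sg_n$-action on it requires care but is elementary.
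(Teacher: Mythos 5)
Your proposal is correct and follows essentially the same route as the paper: stratify by the number of nodes into torus strata $\prod_i(\mathbb{C}^*)^{\lambda_i-1}$, identify their cohomology as hook representations on which the chain-reversal/inversion acts by $(-1)^k$, use additivity of the compactly supported (equivariant) Serre characteristic, and split into the identity part of $\Sg_2$ (giving $\frac12(p_1^x)^2\sum c_\lambda F^y_\lambda$) and the transposition part, where only palindromic compositions contribute, the middle block gives $g^y_{n-2k}$ and the swapped pairs give the plethysm $p_2^y\circ F^y_\mu$. The only difference is presentational: you organize the $\Sg_2$-swap bookkeeping via Lefschetz traces of individual elements on fixed strata, whereas the paper packages the same computation as characters of representations induced from $\Sg_2\ltimes(\Sg_\beta\times\Sg_\alpha\times\Sg_\beta)$ and the wreath-product induction formula.
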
 

Results for $1\leq n \leq 6$ obtained from \eqref{eq:formula} are listed in Appendix \ref{sec:appendix}.

\section{Proof of Theorem \ref{thm:formula}} \label{sec:proof} 
\subsection{Stratification of $\overline{M}_{0, 2 | n}$}
For $k\geq 0$, we denote by $\Delta_{n,k}$ the closed subset of $\overline{M}_{0, 2 | n}$ consisting of curves with at least $k$ nodes. Let $\Delta_{n, k}^{*}:=\Delta_{n, k}\setminus \Delta_{n, k+1}$ be the open part of $\Delta_{n, k}$ which corresponds to curves with exactly $k$ nodes. It is easy to see that $\Delta_{n,k}\neq \emptyset$ only for $0 \leq k \leq n-1$ and that $\Delta_{n, n-1}^*=\Delta_{n, n-1}=\{{\rm pt}\}$. Note that $\Delta_{n, k}^*$ is preserved by the $\Sg_2 \times \Sg_n$-action. Hence its cohomology $H^*(\Delta_{n,k}^*)$ is a representation of $\Sg_2 \times \Sg_n$. 

\begin{definition} For an ordered partition $\lambda$ of $n$ with length $k+1$, let $\Delta^*_{\lambda} \subset \Delta^*_{n,k}$ correspond to all chains of projective lines of length $k+1$ such that precisely $\lambda_i$ of the marked points $(y_1, \ldots, y_n)$ are on the $i$th component (where the component with the marked point $x_0$ is the $1$st component and the one with $x_{\infty}$ is the $(k+1)$th).
\end{definition}

Note that $\Delta^*_{\lambda}$ is preserved by $\Sg_n$ (but not necessarily by $\Sg_2 \times \Sg_n$, see below) and hence $H^*(\Delta_{\lambda}^*)$ is a representation of $\Sg_n$. 

\begin{lemma}\label{lem:strata}
(i) $\Delta_{n, 0}^* \cong \left( \mathbb{C}^* \right)^{n-1}$. 
(ii) $\Delta^*_{\lambda} \cong \prod_{i=1}^{l(\lambda)} \Delta_{\lambda_i, 0}^*$~. \\
(iii) We have a stratification  
\begin{equation*}
\Delta_{n, k}^*= \bigsqcup_{\lambda=(\lambda_1, \ldots, \lambda_{k+1})}
 \Delta^*_{\lambda}~~,
\end{equation*}
where $\lambda$ runs over all ordered partitions of $n$ with length $k+1$.  
\end{lemma}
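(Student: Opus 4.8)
The plan is to prove Lemma~\ref{lem:strata} by a direct geometric analysis of the chains of projective lines that make up $\overline{M}_{0,2|n}$, working through the three parts in order since each builds on the previous one.

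\textbf{Part (i).} First I would describe a curve in $\Delta_{n,0}^*$: it is an irreducible $\mathbb{P}^1$ carrying the two special points $x_0,x_\infty$ and the $n$ points $y_1,\dots,y_n$, all of which may be arbitrary subject only to $y_i \neq x_0, x_\infty$ (the $y_i$ may collide with each other). Using the $\mathrm{PGL}_2$-action I would normalize $x_0 = 0$ and $x_\infty = \infty$; the residual automorphisms fixing $\{0,\infty\}$ as an \emph{ordered} pair form the torus $\mathbb{C}^*$ acting by scaling. Hence the $y_i$ are recorded by their coordinates $(y_1,\dots,y_n) \in (\mathbb{C}^*)^n$ modulo this overall scaling, giving $\Delta_{n,0}^* \cong (\mathbb{C}^*)^n/\mathbb{C}^* \cong (\mathbb{C}^*)^{n-1}$. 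I would note that condition (v) (each component has $\geq 3$ special points) is automatic here since $n \geq 1$ gives $x_0, x_\infty$ plus at least one $y_i$.

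\textbf{Part (ii).} For an ordered partition $\lambda = (\lambda_1,\dots,\lambda_{l(\lambda)})$ of $n$, a point of $\Delta_\lambda^*$ is a chain $C_1 \cup C_2 \cup \cdots \cup C_{l(\lambda)}$ of $\mathbb{P}^1$'s glued at consecutive nodes, with $x_0 \in C_1$, $x_\infty \in C_{l(\lambda)}$, and exactly $\lambda_i$ of the $y$-points on the (open part of the) $i$th component. On each component $C_i$ the node(s) play the role of $x_0$ and/or $x_\infty$: on $C_1$ we have the marked point $x_0$ and the node to $C_2$; on an interior $C_i$ the two nodes; on $C_{l(\lambda)}$ the node from $C_{l(\lambda)-1}$ and the marked point $x_\infty$. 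In every case the component has two distinguished points playing the role of $(x_0,x_\infty)$, together with $\lambda_i$ points playing the role of the $y$'s (with stability condition (v) forcing $\lambda_i \geq 1$, which holds since $\lambda$ is a genuine ordered partition), and the moduli of the configuration on $C_i$ is exactly a point of $\Delta_{\lambda_i,0}^*$. Since the components are glued at fixed (rigid) nodes with no further automorphisms linking them, the moduli of the whole chain is the product, $\Delta_\lambda^* \cong \prod_{i=1}^{l(\lambda)} \Delta_{\lambda_i,0}^*$; one should check the normalization/gluing map is an isomorphism of varieties, which is routine.

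\textbf{Part (iii).} Finally, a curve in $\Delta_{n,k}^*$ has exactly $k$ nodes, hence (by the Remark, its dual graph is linear) is a chain of exactly $k+1$ components $C_1,\dots,C_{k+1}$ with $x_0 \in C_1$ and $x_\infty \in C_{k+1}$; assigning to such a curve the tuple $(\lambda_1,\dots,\lambda_{k+1})$ where $\lambda_i = \#\{\,j : y_j \in C_i\,\}$ gives a well-defined ordered partition of $n$ of length $k+1$ (each $\lambda_i \geq 1$ by stability), and the fiber over this tuple is by definition $\Delta_\lambda^*$. These subsets are disjoint and cover $\Delta_{n,k}^*$, so we get the asserted stratification; that each $\Delta_\lambda^*$ is locally closed follows from (ii) together with the fact that the combinatorial type of the chain and the distribution of the $y_i$ are constructible conditions. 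I expect the main obstacle to be the careful bookkeeping in part (ii): verifying that the "forgetful/gluing" correspondence between a chain and the tuple of its normalized components really is an algebraic isomorphism (not just a bijection on points), in particular keeping track of which node plays the role of $x_0$ versus $x_\infty$ on each component so that the $\mathbb{C}^*$-quotients on the factors match the torus acting on the whole chain.
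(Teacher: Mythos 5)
Your proposal is correct and follows essentially the same route as the paper: the paper also identifies $\Delta_{n,0}^*$ with $(\mathbb{P}^1\setminus\{0,\infty\})^n/\mathbb{C}^*\cong(\mathbb{C}^*)^n/\mathbb{C}^*$, takes (ii) as immediate from the definition of $\Delta_\lambda^*$ as a product of the open strata of the components, and obtains (iii) by counting the ways to distribute the $n$ points $y_1,\dots,y_n$ over the $k+1$ components so that each contains at least one. You simply spell out the normalization and gluing bookkeeping that the paper leaves implicit.
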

\begin{proof}
(i) We have $\Delta_{n, 0}^*\cong  \left(\Pb^1\setminus\{0,\infty\} \right)^n / \mathbb{C}^* \cong  \left( \mathbb{C}^* \right)^n / \mathbb{C}^*$. (ii) Clear from the definition. (iii) This is found by considering the ways to distribute $n$ marked points $(y_1, \ldots, y_n)$ on the chain of projective lines of length $k+1$ so that each irreducible component contains at least one of the points. 
\end{proof}

It follows from Lemma \ref{lem:strata} (ii) that $\Delta^*_{\lambda}$ and $\Delta^*_{\lambda'}$ are ($\Sg_n$-equivariantly) isomorphic when $\lambda$ and $\lambda'$ are different orderings of the same element in $\Par(n)$.

\subsection{Cohomology of $\Delta_{n, 0}^*$}
Since $\Delta_{n, 0}^* \cong \left(\mathbb{C}^*\right)^{n-1}$, $H^i(\Delta_{n, 0}^*)=0$ for $i \geq n$, and moreover the mixed Hodge structure on $H_c^{2(n-1)-i} (\Delta_{n, 0}^*)$ is a pure Tate structure of weight $2(n-1-i)$, that is, $$H_c^{2(n-1)-i} (\Delta_{n, 0}^*)= \mathbb{Q} \bigl(-(n-1-i) \bigr)^{\oplus \binom{n-1}{i}}~~.$$
\begin{lemma}
For $0\leq i \leq n-1$,  we have
$$
{\rm ch}^{x|y}_{2|n} \left(H^i(\Delta_{n, 0}^*)\right) = 
\begin{cases} s_{(2)}^x \, s_{(n-i, 1^i)}^y \quad \text{if $i$ is even} \\ s_{(1^2)}^x \,  s_{(n-i, 1^i)}^y \quad \text{if $i$ is odd}. \end{cases}
$$
\end{lemma}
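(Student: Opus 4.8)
The plan is to compute the $\Sg_2 \times \Sg_n$-representation on $H^i(\Delta_{n,0}^*)$ directly from the explicit geometric description $\Delta_{n,0}^* \cong (\Pb^1 \setminus \{0,\infty\})^n / \mathbb{C}^*$ given in Lemma~\ref{lem:strata}(i), together with the purity statement just recorded, which guarantees there is no extension problem and that each cohomology group is a sum of Tate twists whose underlying vector space carries the relevant group action. The first step is to understand the torus $T := (\mathbb{C}^*)^n / \mathbb{C}^*$ (the quotient by the diagonal $\mathbb{C}^*$) as a $\Sg_2 \times \Sg_n$-space: the group $\Sg_n$ permutes the $n$ coordinates of $(\mathbb{C}^*)^n$ in the obvious way, and descends to $T$; the nontrivial element of $\Sg_2$ swaps $x_0 \leftrightarrow x_\infty$, i.e. it sends each coordinate $t_i$ of $(\Pb^1 \setminus \{0,\infty\})^n$ to $t_i^{-1}$, hence acts on $T$ by group inversion. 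So everything reduces to a computation about the character lattice of $T$.

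Next I would identify $H^1(T,\mathbb{Q})$ as a $\Sg_2 \times \Sg_n$-module. For a torus $T$ with character lattice $X^*(T)$ one has $H^*(T,\mathbb{Q}) \cong \bigwedge^* (X^*(T) \otimes \mathbb{Q})$ functorially, with $H^1(T,\mathbb{Q}) = X^*(T) \otimes \mathbb{Q}$. Here $X^*\big((\mathbb{C}^*)^n\big) = \mathbb{Z}^n$ with $\Sg_n$ permuting the basis, and passing to the quotient by the diagonal $\mathbb{C}^*$ replaces $\mathbb{Z}^n$ by the sublattice (rank $n-1$) of vectors summing to zero — this is precisely the standard $(n-1)$-dimensional representation of $\Sg_n$, whose Frobenius characteristic is $s_{(n-1,1)}^y$. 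Group inversion on $T$ acts as $-1$ on $X^*(T)$, so the $\Sg_2$-action on $H^1(T,\mathbb{Q})$ is by the sign character; concretely $H^1 = V_{(1^2)}^x \boxtimes V_{(n-1,1)}^y$, giving $\ch^{x|y}_{2|n}(H^1) = s_{(1^2)}^x\, s_{(n-1,1)}^y$.

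Then I would take exterior powers: $H^i(\Delta_{n,0}^*) \cong \bigwedge^i H^1$ as $\Sg_2 \times \Sg_n$-modules. On the $\Sg_2$ side, $\bigwedge^i$ of the sign representation is the sign representation when $i$ is odd and the trivial representation when $i$ is even, which accounts exactly for the dichotomy $s_{(2)}^x$ versus $s_{(1^2)}^x$ in the statement. On the $\Sg_n$ side, I need the classical fact that the $i$-th exterior power of the standard $(n-1)$-dimensional representation of $\Sg_n$ is the irreducible $V_{(n-i,1^i)}$; equivalently $\bigwedge^i$ of the permutation representation $\mathbb{Q}^n = V_{(n)} \oplus V_{(n-1,1)}$ is $V_{(n-i)} \oplus V_{(n-i,1^i)}$ (by Pieri) so that the standard-representation part is $V_{(n-i,1^i)}$, with characteristic $s_{(n-i,1^i)}^y$. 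Multiplying the two sides gives the claimed formula. The main obstacle is essentially bookkeeping: pinning down the lattice of $T = (\mathbb{C}^*)^n/\mathbb{C}^*$ correctly (sublattice of sum-zero vectors, not the quotient $\mathbb{Z}^n/\mathbb{Z}$, though these are the same $\Sg_n$-module $V_{(n-1,1)}$), and invoking the exterior-power-of-standard-representation identity; the purity statement already removes any subtlety about the mixed Hodge / weight structure being compatible with the group action, so there is no serious analytic or representation-theoretic difficulty beyond citing $\bigwedge^i V_{(n-1,1)} \cong V_{(n-i,1^i)}$.
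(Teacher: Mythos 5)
Your proposal is correct and follows essentially the same route as the paper: identify $H^1(\Delta_{n,0}^*)$ as $V_{(1^2)}\boxtimes V_{(n-1,1)}$ (sign for the $\Sg_2$-inversion, standard representation for $\Sg_n$) and then take exterior powers, using $\bigwedge^i V_{(n-1,1)}\cong V_{(n-i,1^i)}$. The only difference is cosmetic --- you identify $H^1$ via the character lattice of the quotient torus where the paper uses explicit logarithmic forms $dy_i/y_i$ in coordinates $y_i=z_i/z_n$ --- and your phrase ``$\bigwedge^i$ of the sign representation'' should really be the $i$-th tensor power of the sign character coming from $\bigwedge^i(L\boxtimes W)\cong L^{\otimes i}\boxtimes\bigwedge^i W$ for $L$ one-dimensional, which is what you clearly intend.
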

\begin{proof}
Take an isomorphism $\Delta_{n, 0}^*= \left(\mathbb{C}^*\right)^n / \mathbb{C}^* \to \left(\mathbb{C}^*\right)^{n-1}$ given by 
$$
(z_1: z_2:  \cdots : z_{n-1}: z_n) \mapsto 
(\frac{z_1}{z_n}, \ldots , \frac{z_{n-1}}{z_n})=:(y_1, \ldots, y_{n-1})~~.
$$
Then it is easy to see that 
$H^1(\Delta_{n, 0}^*)=\oplus_{i=1}^{n-1} \mathbb{Q} [\frac{1}{2\pi \sqrt{-1}}\frac{dy_i}{y_i}]$
is the standard representation $s_{(n-1, 1)}$ under the action of $\Sg_n$. The action of $\Sg_2$ is by interchanging $0$ and $\infty$, that is by the isomorphism $t \mapsto 1/t$ of $\Pb^1$, which induces the action $(z_1:  \cdots : z_n) \mapsto (1/z_1:  \cdots : 1/z_n)$ on $\Delta_{n, 0}^*$. This tells us that $(y_1, \ldots, y_{n-1}) \mapsto (1/y_1, \ldots, 1/y_{n-1})$ and since $\frac{d(1/y)}{1/y}=-\frac{dy}{y}$ we conclude that  $H^1(\Delta_{n, 0}^*) = V_{(1^2)} \boxtimes V_{(n-1, 1)}$. Using once more that $\Delta_{n, 0}^* \cong \left(\mathbb{C}^*\right)^{n-1}$ we get 
$$
H^k(\Delta_{n, 0}^*)\cong \wedge^k H^1(\Delta_{n, 0}^*) \cong 
\wedge^k ( V_{(1^2)} \boxtimes V_{(n-1, 1)} ) \cong (\otimes^k  V_{(1^2)} ) \boxtimes V_{(n-k, 1^k)}~~.
$$ 
\end{proof}

\begin{corollary}\label{cor:hodge}
We have the equality
\begin{equation*} 
\sum_{i=0}^{n-1} (-1)^i \, {\rm ch}^{x|y}_{2|n} \left(H_c^{2(n-1)-i} (\Delta_{n, 0}^*)\right) q^{n-1-i} =\frac{1}{2}(p_{1}^x)^2\,f^{y}_{n}+\frac{1}{2} p_{2}^x\,g^{y}_{n}~.
\end{equation*}
\end{corollary}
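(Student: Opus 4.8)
The plan is to translate the known structure of $H^*_c(\Delta_{n,0}^*)$ into symmetric functions and then simplify. Recall from the preceding lemma that
$$
\ch^{x|y}_{2|n}\bigl(H^i(\Delta_{n,0}^*)\bigr)=
\begin{cases}
s_{(2)}^x\, s_{(n-i,1^i)}^y & i \text{ even},\\
s_{(1^2)}^x\, s_{(n-i,1^i)}^y & i \text{ odd}.
\end{cases}
$$
First I would use Poincar\'e duality for the smooth variety $\Delta_{n,0}^*\cong(\mathbb C^*)^{n-1}$, which holds $\Sg_2\times\Sg_n$-equivariantly up to the relevant Tate twist, to relate $H_c^{2(n-1)-i}$ to $H^i$. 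Since the Hodge structures involved are pure Tate (as stated just above the corollary), the twist $\mathbb Q(-(n-1-i))$ contributes nothing to the symmetric function $\ch$ — it only shifts weights, not the group character — so $\ch^{x|y}_{2|n}(H_c^{2(n-1)-i}(\Delta_{n,0}^*))=\ch^{x|y}_{2|n}(H^i(\Delta_{n,0}^*))$. Alternatively, and perhaps more cleanly, I would invoke the exterior-algebra description $H^k\cong\wedge^k H^1$ together with $H_c^{2(n-1)-k}\cong \wedge^{n-1-k}H^1$ (Poincar\'e duality on the torus), but since $\Sg_2$ acts on $H^1$ by $-1$ I must be careful: $H^1=V_{(1^2)}\boxtimes V_{(n-1,1)}$, so $H_c^{2(n-1)-i}$ as an $\Sg_2\times\Sg_n$-module is $(\otimes^{n-1-i}V_{(1^2)})\boxtimes V_{((n-1-i)+1,\,1^{\,?})}$ — working this out shows it again matches the displayed formula with the same parity bookkeeping.

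Next I would substitute into the left-hand side of the corollary and split the alternating sum according to the parity of $i$:
$$
\sum_{i=0}^{n-1}(-1)^i\,\ch^{x|y}_{2|n}\bigl(H_c^{2(n-1)-i}(\Delta_{n,0}^*)\bigr)q^{n-1-i}
= s_{(2)}^x\!\!\sum_{i \text{ even}}\! s^y_{(n-i,1^i)}q^{n-1-i}
\;-\;s_{(1^2)}^x\!\!\sum_{i \text{ odd}}\! s^y_{(n-i,1^i)}q^{n-1-i}.
$$
Now I would recall the definitions $f^y_n=\sum_{i=0}^{n-1}(-1)^i s^y_{(n-i,1^i)}q^{n-1-i}$ and $g^y_n=\sum_{i=0}^{n-1}s^y_{(n-i,1^i)}q^{n-1-i}$, so that the even-$i$ part of $\sum (-1)^i s^y_{(n-i,1^i)}q^{n-1-i}$ is $\tfrac12(f^y_n+g^y_n)$ and the odd-$i$ part (with its sign) is $\tfrac12(f^y_n-g^y_n)$. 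Hence the expression above equals $s^x_{(2)}\cdot\tfrac12(f^y_n+g^y_n)+s^x_{(1^2)}\cdot\tfrac12(f^y_n-g^y_n)$, which regroups as $\tfrac12(s^x_{(2)}+s^x_{(1^2)})f^y_n+\tfrac12(s^x_{(2)}-s^x_{(1^2)})g^y_n$.

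Finally I would evaluate the $\Sg_2$ Schur functions: $s^x_{(2)}+s^x_{(1^2)}=h_2^x+e_2^x=(p_1^x)^2$ and $s^x_{(2)}-s^x_{(1^2)}=h_2^x-e_2^x=p_2^x$ (directly from $h_2=\tfrac12(p_1^2+p_2)$, $e_2=\tfrac12(p_1^2-p_2)$). Substituting gives exactly $\tfrac12(p_1^x)^2 f^y_n+\tfrac12 p_2^x g^y_n$, which is the claimed right-hand side. The only genuine subtlety — the part I would treat most carefully — is confirming the $\Sg_2$-equivariance of Poincar\'e duality together with the correct parity of the $\Sg_2$-action on $H_c^{2(n-1)-i}$; once that sign is pinned down (it comes from $d(1/y)/(1/y)=-dy/y$ as in the previous lemma, applied $n-1-i$ times on the dual side), the rest is the elementary symmetric-function identity just described.
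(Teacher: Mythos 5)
Your proof is correct and follows essentially the same route as the paper: equivariant Poincar\'e duality combined with the preceding lemma, then the identities $2s_{(2)}^x=(p_1^x)^2+p_2^x$ and $2s_{(1^2)}^x=(p_1^x)^2-p_2^x$ (your parity splitting just makes this regrouping explicit). The one step you gloss is that duality produces the \emph{dual} representation $(H^{i})^{\vee}$ rather than $H^{i}$ itself; the paper closes this by noting that every irreducible representation of $\Sg_2\times\Sg_n$ is defined over $\mathbb{Q}$, hence self-dual, after which your computation coincides with the paper's proof.
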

\begin{proof}
By Poincar\'{e} duality, $H_c^{2(n-1)-i} (\Delta_{n, 0}^*) \cong H^{i} (\Delta_{n, 0}^*)^{\vee}$, and since every irreducible representation of $\Sg_2 \times \Sg_n$ is defined over $\mathbb{Q}$, the dual representation  is isomorphic to itself. The equality now follows from the lemma together with the relations $2s_{(2)}^x=(p_1^x)^2+p_2^x$ and $2s_{(1^2)}^x=(p_1^x)^2-p_2^x$. 
\end{proof}

\subsection{Cohomology of $\Delta_{\lambda}^*$}
\begin{corollary}\label{cor:pure}
For any ordered partition $\lambda$ of $n$ with length $k+1$, $H_c^{2(n-k-1)-i} (\Delta_{\lambda}^*)$ is a pure Hodge structure of weight $2(n-k-1-i)$.
\end{corollary}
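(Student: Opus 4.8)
The plan is to deduce the purity of $H_c^{\bullet}(\Delta_\lambda^*)$ from Lemma~\ref{lem:strata}(ii) together with the purity statement for $\Delta_{n,0}^*$ established just above. First I would recall that Lemma~\ref{lem:strata}(ii) gives an isomorphism $\Delta^*_{\lambda} \cong \prod_{i=1}^{l(\lambda)} \Delta_{\lambda_i, 0}^*$, where $l(\lambda) = k+1$ and $\sum_i \lambda_i = n$, so that $\dim \Delta^*_\lambda = \sum_{i=1}^{k+1}(\lambda_i - 1) = n - k - 1$. Each factor $\Delta_{\lambda_i,0}^* \cong (\mathbb{C}^*)^{\lambda_i - 1}$ has, by the discussion preceding the previous lemma, $H_c^{2(\lambda_i-1)-j}(\Delta_{\lambda_i,0}^*)$ pure of weight $2(\lambda_i-1-j)$, i.e.\ equal to $\mathbb{Q}(-(\lambda_i-1-j))^{\oplus \binom{\lambda_i-1}{j}}$.

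Next I would apply the K\"unneth formula for compactly supported cohomology, which is compatible with mixed Hodge structures, to write $H_c^{\bullet}(\Delta_\lambda^*) \cong \bigotimes_{i=1}^{k+1} H_c^{\bullet}(\Delta_{\lambda_i,0}^*)$. A tensor product of pure Hodge structures is pure, and the weights add: a summand of $H_c^{m}(\Delta_\lambda^*)$ coming from choosing degree $2(\lambda_i-1)-j_i$ in the $i$-th factor (so that $m = \sum_i (2(\lambda_i-1)-j_i) = 2(n-k-1) - \sum_i j_i$) has weight $\sum_i 2(\lambda_i - 1 - j_i) = 2(n-k-1) - 2\sum_i j_i$. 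Setting $i := \sum_i j_i$ we get $m = 2(n-k-1) - i$ and weight $2(n-k-1-i)$, which is exactly the claimed statement. In fact each such piece is a Tate twist $\mathbb{Q}(-(n-k-1-i))$ raised to some multiplicity, so the Hodge structure is not only pure but pure Tate, though only purity is asserted.

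There is no real obstacle here: the only points that need care are that the isomorphism of Lemma~\ref{lem:strata}(ii) is one of complex varieties (hence induces an isomorphism of mixed Hodge structures on cohomology), and that the K\"unneth decomposition respects the weight filtration, which is a standard fact in Deligne's theory. I would therefore phrase the proof in two sentences: invoke Lemma~\ref{lem:strata}(ii) and K\"unneth to reduce to the factors $\Delta_{\lambda_i,0}^* \cong (\mathbb{C}^*)^{\lambda_i-1}$, then observe that the purity and additivity of weights for the tori give the stated weight $2(n-k-1-i)$ in cohomological degree $2(n-k-1)-i$. The bookkeeping of indices is the only thing to get right, and it is the elementary computation $\sum_i(\lambda_i-1) = n - (k+1)$ together with additivity of weights under tensor product.
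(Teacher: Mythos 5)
Your proposal is correct and follows essentially the same route as the paper, which deduces the corollary from Lemma~\ref{lem:strata}(ii) and the purity of $H_c^{\bullet}(\Delta_{\lambda_i,0}^*)$; you merely make explicit the K\"unneth/weight-additivity bookkeeping that the paper leaves implicit.
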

\begin{proof} This follows from Lemma \ref{lem:strata} (ii) and the purity of the cohomology of $\Delta_{i,0}^*$.
\end{proof}

\begin{corollary}\label{cor:lambda}
For any ordered partition $\lambda$ of $n$ with length $k+1$ we have 
\begin{equation*} 
\sum_{i=0}^{n-k-1} (-1)^i \, {\rm ch}^y_{n} \left(
H_c^{2(n-k-1)-i} (\Delta_{\lambda}^*)\right) q^{n-k-1-i} = F_{\lambda}^y~. 
\end{equation*}
\end{corollary}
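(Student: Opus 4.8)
The plan is to reduce Corollary \ref{cor:lambda} to the already-established Corollary \ref{cor:hodge} by exploiting the product structure of $\Delta^*_\lambda$ from Lemma \ref{lem:strata}(ii), together with the K\"unneth formula and the multiplicativity of the characteristic map $\ch^y$ under induction from Young subgroups.

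First I would set up the key compatibility between the three operations involved: the K\"unneth decomposition of cohomology, the $\Sg_n$-module structure coming from the fact that $\Sg_n$ acts on $\Delta^*_\lambda \cong \prod_{i=1}^{k+1}\Delta^*_{\lambda_i,0}$ by permuting the $y$-marked points blockwise (so that the stabilizer of the partition into blocks is the Young subgroup $\Sg_{\lambda_1}\times\cdots\times\Sg_{\lambda_{k+1}}$), and the compactly-supported Euler characteristic weighted by the Serre/weight grading. Concretely, since $\Delta^*_\lambda$ is a product of the $\Delta^*_{\lambda_i,0}$ and each factor has cohomology that is pure of the expected weight (Corollary \ref{cor:pure}), the generating series $\sum_i (-1)^i \ch^y_n(H_c^{2(n-k-1)-i}(\Delta^*_\lambda))\, q^{n-k-1-i}$ is the image under $\ch^y_n$ of the induced representation $\mathrm{Ind}_{\Sg_{\lambda_1}\times\cdots\times\Sg_{\lambda_{k+1}}}^{\Sg_n}$ of the external tensor product of the graded virtual $\Sg_{\lambda_i}$-representations $\sum_j (-1)^j H_c^{2(\lambda_i-1)-j}(\Delta^*_{\lambda_i,0})\,q^{\lambda_i-1-j}$. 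Here one must be a little careful to track the weight shifts: the top compactly-supported degree of the product is $\sum_i 2(\lambda_i-1) = 2(n-k-1)$ since $l(\lambda)=k+1$, and the $q$-exponents add correctly under the K\"unneth product, which is exactly what makes the generating series multiplicative.

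Next I would invoke the first displayed identity in the ``Symmetric functions'' subsection, namely $\ch^y_{\sum n_i}(\mathrm{Ind}(V_1\boxtimes\cdots\boxtimes V_k)) = \prod_i \ch^y_{n_i}(V_i)$, extended $q$-linearly to the polynomial ring $\Lambda^y[q]$, to conclude that the left-hand side of Corollary \ref{cor:lambda} equals $\prod_{i=1}^{l(\lambda)} \bigl(\sum_j (-1)^j \ch^y_{\lambda_i}(H_c^{2(\lambda_i-1)-j}(\Delta^*_{\lambda_i,0}))\, q^{\lambda_i-1-j}\bigr)$. For each factor I would apply the ``$\ch^y_n$ only'' specialization of Corollary \ref{cor:hodge}: that corollary computes the full $\Sg_2\times\Sg_n$-equivariant series as $\tfrac12 (p_1^x)^2 f^y_n + \tfrac12 p_2^x g^y_n$, and restricting attention to $\Sg_n$ alone — equivalently, reading off only the $\Sg_n$-character, which amounts to setting $p_1^x\mapsto 1$, $p_2^x\mapsto 1$ (so $s^x_{(2)},s^x_{(1^2)}\mapsto 1$ at the level of the preceding lemma) — collapses both $f^y_n$ and $g^y_n$ terms and yields $\sum_j(-1)^j\ch^y_n(H_c^{2(n-1)-j}(\Delta^*_{n,0}))\,q^{n-1-j} = f^y_n$, using that $s^y_{(n-i,1^i)}$ is the $\Sg_n$-characteristic of $H^i$ and the sign $(-1)^i$ survives. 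Hence each factor is $f^y_{\lambda_i}$, and the product is $\prod_{i=1}^{l(\lambda)} f^y_{\lambda_i} = F^y_\lambda$ by definition, which is precisely the claim.

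The main obstacle I anticipate is bookkeeping rather than anything conceptually deep: making sure the weight/Serre grading is tracked correctly through the K\"unneth isomorphism for \emph{compactly supported} cohomology, so that the $q$-exponents and the signs $(-1)^j$ combine the way the statement requires, and checking that the $\Sg_n$-action on $\Delta^*_\lambda$ really does have the Young subgroup $\prod_i\Sg_{\lambda_i}$ as the relevant stabilizer so that induction is the correct operation. A minor subtlety is that the $\Sg_2$-factor does not act on $\Delta^*_\lambda$ in general (only $\Sg_n$ does), so one genuinely works with $\ch^y_n$ here and not $\ch^{x|y}_{2|n}$; the passage from the $\Sg_2\times\Sg_n$-statement of Corollary \ref{cor:hodge} to the purely $\Sg_n$-statement needed for the individual factors should be spelled out, but it is immediate from the formula $2s^x_{(2)} = (p_1^x)^2 + p_2^x$, $2s^x_{(1^2)} = (p_1^x)^2 - p_2^x$ already used in the proof of that corollary.
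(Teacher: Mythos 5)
Your argument is essentially the paper's proof: decompose $\Delta^*_{\lambda}$ as a product via Lemma~\ref{lem:strata}(ii), observe that the $\Sg_n$-action on $H^*_c(\Delta^*_{\lambda})$ is induced from the Young subgroup $\Sg_{\lambda_1}\times\cdots\times\Sg_{\lambda_{k+1}}$, and apply Corollary~\ref{cor:hodge} factorwise after forgetting the $\Sg_2$-action, using the multiplicativity of $\ch^y$ under induction. One parenthetical is off, though: the substitution $p_1^x\mapsto 1$, $p_2^x\mapsto 1$ sends $s^x_{(1^2)}=\tfrac12\bigl((p_1^x)^2-p_2^x\bigr)$ to $0$, not $1$ (it computes the multiplicity of the trivial $\Sg_2$-representation), and applied to the right-hand side of Corollary~\ref{cor:hodge} it would yield $\tfrac12\bigl(f^y_n+g^y_n\bigr)$ rather than $f^y_n$. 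The correct way to forget the $\Sg_2$-action is to apply $\partial^2/\partial(p_1^x)^2$ (as in Section~\ref{sec:gener}), or equivalently to replace both one-dimensional $\Sg_2$-characters $s^x_{(2)}$ and $s^x_{(1^2)}$ by $1$ directly in the Lemma; either way each factor is indeed $f^y_{\lambda_i}$, so your conclusion stands.
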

\begin{proof} 
From Lemma~\ref{lem:strata} (ii) we know that $\Delta_{\lambda}^* \cong \prod_{i=1}^{k+1} \Delta_{\lambda_i, 0}^*$, and on each $\Delta_{\lambda_i,0}^*$ we have an action of $\Sg_{\lambda_i}$.  The action of $\Sg_n$ on $H^*_c(\Delta_{\lambda}^*)$ will thus be the induced action from $\Sg_{\lambda_1} \times \ldots \times \Sg_{\lambda_{k+1}}$ to $\Sg_n$. The result now follows from Corollary~\ref{cor:hodge}, forgetting the action of $\Sg_2$.
\end{proof}

\subsection{Proof of Theorem \ref{thm:formula}} 
We have the following long exact sequence of cohomology with compact support:
\begin{equation}\label{eq:exact}
\cdots 
\longrightarrow 
H^{i-1}_c (\Delta_{n, k+1}) 
\longrightarrow 
H^{i}_c (\Delta_{n, k}^*)
\longrightarrow
H^{i}_c (\Delta_{n, k})
\longrightarrow 
H^{i}_c (\Delta_{n, k+1}) 
\longrightarrow 
\cdots~~. 
\end{equation}
This is an exact sequence of both mixed Hodge structures and $\Sg_2 \times \Sg_n$-representations. Therefore, using the exact sequence \eqref{eq:exact} inductively (this is just the additivity of the Poincar\'e-Serre polynomial) we get 
\begin{equation} \label{eq:E}
E_{\Sg_2 \times \Sg_n} (q)
=
\sum_{k=0}^{n-1} 
\left\{
\sum_{i=0}^{n-1} (-1)^i \, 
{\rm ch}^{x|y}_{2|n}\left( H_c^{2(n-1)-i}(\Delta_{n, k}^*)\right)  q^{n-1-i}
\right\}~.
\end{equation}

We will now find a formula for ${\rm ch}^{x|y}_{2|n}\bigl( H_c^{2(n-1)-i}(\Delta_{n, k}^*)\bigr)$. Let us begin with a strata $\Delta^*_{\lambda}$ for an ordered partition $\lambda$ of $n$ with length $k+1$. The action of $\Sg_2$ will then send the strata given by $\lambda$ to the one given by $\lambda'=(\lambda_{k+1},\lambda_k,\ldots,\lambda_1)$. We will therefore divide into two cases. 

Let us first assume that $\lambda \neq \lambda'$. Since the action of $\Sg_2$ interchanges the two components it will also interchange the factors of $H_c^{i}( \Delta^*_{\lambda} \sqcup \Delta^*_{\lambda'})= H_c^{i}( \Delta^*_{\lambda} ) \oplus H_c^{i}( \Delta^*_{\lambda'} )$ and hence 
\begin{equation} \label{eq:case1}
{\rm ch}^{x|y}_{2|n}\bigl(H_c^{i}( \Delta^*_{\lambda} \sqcup \Delta^*_{\lambda'}) \bigr) = (p_{1}^x)^2  \,  {\rm ch}_{n}^y\bigl(H_c^{i}( \Delta^*_{\lambda} )\bigr)~. 
\end{equation}

Let us now assume that $\lambda = \lambda'$. We can then decompose our space as $\Delta^*_{\lambda}=\Delta^*_1 \times \Delta^*_2 \times \Delta^*_3$ where, if $k+1=2m$,  
$$\Delta^*_{1}:=\prod_{i=1}^m \Delta^*_{\lambda_i,0}, \quad \Delta^*_{2}:=\{{\rm pt}\}, \quad \Delta^*_{3}:=\prod_{i=m+1}^{2m} \Delta^*_{\lambda_i,0}~,$$ 
and, if $k+1=2m+1$,
$$\Delta^*_{1}:=\prod_{i=1}^m \Delta^*_{\lambda_i,0}, \quad \Delta^*_{2}:=\Delta^*_{\lambda_{m+1},0}, \quad \Delta^*_{3}:=\prod_{i=m+2}^{2m+1} \Delta^*_{\lambda_i,0}~.$$
Let us put $\alpha:=\lambda_{m+1}$ if $k+1$ is odd and $\alpha:=1$ if $k+1$ is even, and in both cases $\beta:=\sum_{i=1}^m \lambda_i$. The action of $\Sg_2$ interchanges the ($\Sg_{\beta}$-equivariantly) isomorphic components $\Delta_1^*$ and $\Delta_3^*$ and sends the space $\Delta^*_2$ to itself. Define the semidirect product $\Sg_2 \ltimes (\Sg_{\beta} \times \Sg_{\alpha} \times \Sg_{\beta})$ where $\Sg_2$ acts as the identity on $\Sg_{\alpha}$ and permutes the factors $\Sg_{\beta} \times \Sg_{\beta}$ (i.e. as the wreath product). The group $\Sg_2 \ltimes (\Sg_{\beta} \times \Sg_{\alpha} \times \Sg_{\beta})$ naturally embeds, by the map $i$ say, in $\Sg_{2\beta+\alpha}=\Sg_n$. Let us then put $\Sg_2 \ltimes (\Sg_{\beta} \times \Sg_{\alpha} \times \Sg_{\beta})$ in $\Sg_2 \times \Sg_{n}$ by $(\tau,\sigma) \mapsto (\tau,i(\tau,\sigma)$), where $\tau \in \Sg_2$ and $\sigma \in \Sg_{\beta} \times \Sg_{\alpha} \times \Sg_{\beta}$. The action of $\Sg_2 \times \Sg_n$ on $\Delta^*_{\lambda}$ will then be the induced action from $\Sg_2 \ltimes (\Sg_{\beta} \times \Sg_{\alpha} \times \Sg_{\beta})$ acting naturally on $\Delta_1^* \times \Delta_2^* \times \Delta_3^*$. Using Corollary~\ref{cor:hodge} we conclude that 
\begin{multline} \label{eq:case2} 
{\rm ch}^{x|y}_{2|n}\bigl(H_c^{i}( \Delta^*_{\lambda} ) \bigr) =\frac{1}{2} p_{(1^2)}^x \, f^{y}_{\alpha} \, \Bigl(p_{(1^2)}^y \circ \ch_{\beta}^y \bigl(H_c^{i}(\Delta^*_{1})\bigr) \Bigr) + \frac{1}{2} p_{(2)}^x \, g^{y}_{\alpha} \, \Bigl(p_{(2)}^y \circ \ch_{\beta}^y \bigl(H_c^{i}(\Delta^*_{1})\bigr) \Bigr) ~.
\end{multline}
Applying formula \eqref{eq:case1} and formula \eqref{eq:case2} (and using Lemma \ref{lem:strata} (iii) and Corollary \ref{cor:lambda}) to equation \eqref{eq:E}, gives equation \eqref{eq:formula}.

\section{Generating series} \label{sec:gener}
\subsection{Generating series of $E_{\Sg_2 \times \Sg_n} (q)$}
For any sequence of polynomials $h_n$ we have the formal identity, 
\begin{equation} \label{eq:geneq}
 1+\sum_{n=1}^{\infty} \Bigl( \sum_{\lambda \in \mathcal{P}(n)} c_{\lambda} \prod_{j=1}^{l(\lambda)} h_{\lambda_ j} \Bigr) =  1+ \sum_{r=1}^{\infty} \Bigl(\sum_{n=1}^{\infty} h_{n} \Bigr)^{r} =  \Bigl(1-\sum_{n=1}^{\infty} h_{n} \Bigr)^{-1}.
\end{equation}

The following proposition follows directly from \eqref{eq:geneq} and Theorem~\ref{thm:formula}.

\begin{proposition} \label{prop:gen2} The generating series of $E_{\Sg_2 \times \Sg_n} (q)$ is determined by, 
  \begin{equation}  \label{eq:gen2} 
1+\sum_{n=1}^{\infty} E_{\Sg_2\times \Sg_n}(q) = \frac{1}{2}(p_1^x)^2 \, \Bigl(1-\sum_{n=1}^{\infty} f_n^y\Bigr)^{-1}+\frac{1}{2} p_2^x \, \Bigl(1+\sum_{n=1}^{\infty} g_n^y \Bigr) \Bigl(1-\sum_{n=1}^{\infty} (p_2^y \circ f_n^y) \Bigr)^{-1}.    
  \end{equation}
\end{proposition}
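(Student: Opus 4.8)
The plan is to sum the formula of Theorem~\ref{thm:formula} over $n\ge 1$ and to recognize the resulting series through two applications of the formal identity \eqref{eq:geneq}, one for each of the two summands of $E_{\Sg_2\times\Sg_n}(q)$. For the summand carrying $(p_1^x)^2$ this is immediate: since $F_\lambda^y=\prod_{j=1}^{l(\lambda)}f_{\lambda_j}^y$ by definition, the inner sum over $\Par(n)$ is exactly $\sum_{\lambda\in\Par(n)}c_\lambda\prod_{j=1}^{l(\lambda)}f_{\lambda_j}^y$, and summing over $n\ge1$ and applying \eqref{eq:geneq} with $h_n=f_n^y$ turns it (together with the $n=0$, empty-partition term $1$) into $\bigl(1-\sum_{n\ge1}f_n^y\bigr)^{-1}$.

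For the summand carrying $p_2^x$ I would first reindex the double sum $\sum_{n\ge1}\sum_{k=0}^{\lfloor n/2\rfloor}g_{n-2k}^y\sum_{\mu\in\Par(k)}c_\mu\bigl(p_2^y\circ F_\mu^y\bigr)$ by setting $m:=n-2k$: the pairs $(n,k)$ with $n\ge1$ and $0\le 2k\le n$ correspond bijectively to the pairs $(m,k)$ with $m,k\ge0$ and $(m,k)\ne(0,0)$, so the double sum equals $\bigl(\sum_{m\ge0}g_m^y\bigr)\bigl(\sum_{k\ge0}\sum_{\mu\in\Par(k)}c_\mu(p_2^y\circ F_\mu^y)\bigr)$ minus the single $(m,k)=(0,0)$ term, which is $g_0^y=1$. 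Next I would use that the plethystic operator $h\mapsto p_2^y\circ h$ is the ring endomorphism of $\Lambda^y[q]$ determined by $p_k^y\mapsto p_{2k}^y$ and $q\mapsto q^2$, and is in particular multiplicative (see \cite{Mac}); hence $p_2^y\circ F_\mu^y=\prod_{j=1}^{l(\mu)}\bigl(p_2^y\circ f_{\mu_j}^y\bigr)$, so the factor $\sum_{k\ge0}\sum_{\mu\in\Par(k)}c_\mu(p_2^y\circ F_\mu^y)$ is again of the shape handled by \eqref{eq:geneq}, now with $h_n=p_2^y\circ f_n^y$, and collapses to $\bigl(1-\sum_{n\ge1}(p_2^y\circ f_n^y)\bigr)^{-1}$. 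The remaining factor is $\sum_{m\ge0}g_m^y=1+\sum_{n\ge1}g_n^y$, since $g_0^y=1$.

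Reassembling the two halves with the prefactors $\tfrac12(p_1^x)^2$ and $\tfrac12 p_2^x$ produces the right-hand side of \eqref{eq:gen2}. The only step requiring genuine attention — which I expect to be the main, though purely bookkeeping, obstacle — is the matching of the degree-zero ($n=0$) contributions on the two sides: one must keep track of which of the three geometric-series inverses each stray $\pm1$ belongs to, use $g_0^y=1$ together with the fact that the empty partition contributes $1$ to each plethystic sum, and observe that the $y$-constant term of the right-hand side of \eqref{eq:gen2} is $\tfrac12\bigl((p_1^x)^2+p_2^x\bigr)=s_{(2)}^x$, which is precisely the $n=0$ instance of \eqref{eq:formula} and thus accounts for the initial term on the left of \eqref{eq:gen2}.
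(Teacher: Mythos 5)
Your proposal is correct and follows the same route as the paper, which simply notes that Proposition~\ref{prop:gen2} ``follows directly'' from the formal identity \eqref{eq:geneq} applied to Theorem~\ref{thm:formula}; you merely spell out the details the paper suppresses (the reindexing $m=n-2k$, the multiplicativity of $h\mapsto p_2^y\circ h$ giving $p_2^y\circ F_\mu^y=\prod_j(p_2^y\circ f_{\mu_j}^y)$, and the $n=0$ normalization). Your remark that the $y$-constant term of the right-hand side is $s_{(2)}^x$ rather than $1$ correctly identifies the only (harmless) imprecision, which is present in the paper's own statement and is what the phrase ``is determined by'' quietly absorbs.
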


\begin{remark} Consider the moduli space $M$ defined as in Definition~\ref{def:definition} but with the additional demand that $y_1,\ldots,y_n$ are distinct from each other. From Carel Faber we learnt the following formula, which is very similar to \eqref{eq:gen2}, for the generating series of the $\Sg_2 \times \Sg_n$-equivariant Poincar\'{e}-Serre polynomial of $M$. Carel Faber obtained the formula as a direct consequence of an equality he learned from Ezra Getzler. These results have not been published.

Let $h^y_{n+2}$ be the $\Sg_{n+2}$-equivariant Poincar\'{e}-Serre polynomial of $M_{0,n+2}$, the moduli space of genus~$0$ curves with $n+2$ marked distinct points. The $\Sg_2 \times \Sg_n$-equivariant Poincar\'{e}-Serre polynomial of the open part of $M$ (defined using the compactly supported Euler-characteristic) consisting of irreducible curves will then equal
$$\frac{1}{2} (p_1^x)^2 \, \tilde f^y_n+ \frac{1}{2} p_2^x \, \tilde g^y_n = \frac{1}{2} (p_1^x)^2 \, \Bigl( \frac{\partial^2 \, h^y_{n+2}}{\partial (p_1^y)^2}  \Bigr)+ \frac{1}{2} p_2^x \, \Bigl(2 \, \frac{\partial \, h^y_{n+2}}{\partial p_2} \Bigr).$$
From the proof of Theorem~\ref{thm:formula} we see that replacing $f^y_n$ by $\tilde f^y_n$ (and $g^y_n$ by $\tilde g^y_n$) in equation~\eqref{eq:gen2} gives the $\Sg_2 \times \Sg_n$-equivariant Poincar\'{e}-Serre polynomial of $M$.
\end{remark}

\begin{remark}
The polynomials $f_n^y$ and $g_n^y$ can be formulated in terms of $P^y_{\lambda}(q) \in \Lambda^y[q]$, the Hall--Littlewood symmetric function associated to $\lambda\in \Par$ (cf. \cite[III-2]{Mac}). This function is defined as the limit of the following symmetric polynomial: 
$$
P_{\lambda}(y_1, \ldots, y_k; q)
=
\sum_{w\in \Sg_k / \Sg_k^{\lambda}} 
w\left(
y_1^{\lambda_1} \cdots y_k^{\lambda_k}
\prod_{\lambda_i > \lambda_j} 
\frac{y_i - q y_j}{y_i - y_j} 
\right)\, , 
$$ 
where $\Sg_k^{\lambda}$ is the stabilizer subgroup of $\lambda$ in $\Sg_k$ and $l(\lambda)\leq k$ is assumed. In the special case $\lambda=(n)$, where $n \geq1$, the following formula is known (cf. \cite[p. 214]{Mac}): 
\begin{equation} \label{eq:hook}
P^{y}_{(n)}(q) =\sum_{r=0}^{n-1} (-q)^r s^y_{(n-r, 1^r)}~,  
\end{equation}
hence $f_n^y=q^{n-1}P^{y}_{(n)}(q^{-1})$ and $g_n^y=q^{n-1}P^{y}_{(n)}(-q^{-1})$.
\end{remark}

\subsection{Generating series of $E_{\Sg_n} (q)$}
 The $\Sg_n$-equivariant Poincar\'e-Serre polynomial of $\overline{M}_{0, 2 | n}$ equals 
\begin{equation*}
E_{\Sg_n}(q):=\sum_{i=0}^{n-1}
{\rm ch}^{y}_{n}\left(H^{2i}(\overline{M}_{0, 2 | n})\right) q^{i}
= \frac{\partial^2}{\partial (p_1^x)^2} E_{\Sg_2\times \Sg_n}(q)
\in \Lambda^y[q]~, 
\end{equation*}
and so 
\begin{equation} \label{eq:inverse}
1+\sum_{n=1}^{\infty} E_{\Sg_n}(q)=\Bigl(1-\sum_{n=1}^{\infty} f_n^y \Bigr)^{-1}.
\end{equation}
Corollary~\ref{cor:hodge} then tells us that the generating series of $E_{\Sg_n}(q)$ is the multiplicative inverse of the generating series (in compactly supported cohomology) of $\Delta^*_{n,0}$, which is the open part of $\overline{M}_{0, 2 | n}$ consisting of irreducible curves. 

If we set $q=1$, the Hall--Littlewood function $P^{y}_{(n)}(q^{-1})$ becomes the $n$th power sum $p_n^y$ and formula \eqref{eq:inverse} takes a very simple form. Let $e_{\Sg_n}:=E_{\Sg_n}(1) \in \Lambda^y$, be the $\Sg_n$-equivariant Euler characteristic of $\overline{M}_{0, 2 | n}$. We then have 
\begin{equation*}
1+ \sum_{n=1}^{\infty} e_{\Sg_n} z^n  
=\left(
1- \sum_{n=1}^{\infty} p_{n}^y z^n
\right)^{-1}\, .
\end{equation*}

\appendix
\section{Consistency with Procesi's result} \label{sec:procesi}
\subsection{Procesi's recursive formula}
In \cite{P}, Procesi obtained the following recursive relation among $E_{\Sg_{n}}(q)$ with respect to $n$. 
\begin{theorem}[Procesi]
The $E_{\Sg_n}(q)$ satisfy
\begin{equation*} 
E_{\Sg_{n+1}}(q)
=
s^y_{(n+1)} \sum_{i=0}^n q^i
+ \sum_{i=0}^{n-2}
s^y_{(n-i)}\, E_{\Sg_{i+1}}(q)\, 
\left(
\sum_{k=1}^{n-i-1} q^k
\right)~.
\end{equation*}
\end{theorem}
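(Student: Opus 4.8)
The plan is to deduce Procesi's recursion directly from the generating-series identity \eqref{eq:inverse}, $1+\sum_{n\ge1}E_{\Sg_n}(q)=\bigl(1-\sum_{n\ge1}f^y_n\bigr)^{-1}$, after first putting the generating function of the $f^y_n$ into closed form. Introduce an auxiliary variable $z$ recording the degree in the $y$-variables and set $H(z):=\sum_{m\ge0}s^y_{(m)}z^m=\sum_{m\ge0}h^y_mz^m$ and $E(z):=\sum_{m\ge0}e^y_mz^m$, so that $H(z)E(-z)=1$. Expanding each hook Schur function in $f^y_n=\sum_{i=0}^{n-1}(-1)^is^y_{(n-i,1^i)}q^{n-1-i}$ by the identity $s^y_{(a,1^b)}=\sum_{j=0}^{b}(-1)^jh^y_{a+j}e^y_{b-j}$ (a special case of the Jacobi--Trudi/Giambelli formulas, obtained by iterating the Pieri rule $h^y_ae^y_b=s^y_{(a+1,1^{b-1})}+s^y_{(a,1^b)}$; see, e.g., \cite{Mac}) and summing over $n$, the resulting triple sum factors after the change of summation variables sending $(n,i,j)$ to $m:=n-i+j$ (the index of $h$), $s:=i-j$ (the index of $e$), and the free parameter $j$ (now running $0\le j\le m-1$): the two sign factors combine to $(-1)^s$, the power of $q$ becomes $q^{m-1-j}$, and the geometric sum over $j$ contributes $\frac{q^m-1}{q-1}$. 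This gives
\[
\sum_{n\ge1}f^y_nz^n=\frac{1}{q-1}\bigl(H(qz)-H(z)\bigr)E(-z)=\frac{1}{q-1}\Bigl(\frac{H(qz)}{H(z)}-1\Bigr).
\]
(The same formula follows from \eqref{eq:hook} and the Hall--Littlewood generating function $\sum_{r\ge0}q_r(y;t)z^r=\prod_i\frac{1-ty_iz}{1-y_iz}$ of \cite[III.2]{Mac}, using $f^y_n=q^{n-1}P^y_{(n)}(q^{-1})$.)

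Substituting this into \eqref{eq:inverse}, a short computation gives $1-\sum_{n\ge1}f^y_nz^n=\frac{qH(z)-H(qz)}{(q-1)H(z)}$; since $qH(z)-H(qz)=\sum_{m\ge0}(q-q^m)h^y_mz^m=(q-1)\bigl(1-\sum_{m\ge2}(\sum_{k=1}^{m-1}q^k)h^y_mz^m\bigr)$ (the $m=0$ term produces the factor $q-1$ and the $m=1$ term vanishes), it follows that, with $\mathcal E(z):=1+\sum_{n\ge1}E_{\Sg_n}(q)z^n$,
\[
\mathcal E(z)=H(z)\Bigl(1-\sum_{m\ge2}\bigl(\textstyle\sum_{k=1}^{m-1}q^k\bigr)h^y_mz^m\Bigr)^{-1}.
\]
Clearing the denominator --- legitimate since $\Lambda^y[q][[z]]$ is commutative --- yields $\mathcal E(z)=H(z)+\bigl(\sum_{m\ge2}(\sum_{k=1}^{m-1}q^k)h^y_mz^m\bigr)\mathcal E(z)$. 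Comparing coefficients of $z^{n+1}$, using $[z^0]\mathcal E=1$, $[z^j]\mathcal E=E_{\Sg_j}(q)$ for $j\ge1$ and $h^y_m=s^y_{(m)}$, gives
\[
E_{\Sg_{n+1}}(q)=s^y_{(n+1)}\sum_{i=0}^{n}q^i+\sum_{m=2}^{n}s^y_{(m)}\,E_{\Sg_{n+1-m}}(q)\sum_{k=1}^{m-1}q^k,
\]
and re-indexing the last sum by $i=n-m$ produces exactly the stated formula. (One can sanity-check $E_{\Sg_1}(q)=s^y_{(1)}$ and $E_{\Sg_2}(q)=s^y_{(2)}(1+q)$ against both the recursion and \eqref{eq:inverse}.)

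The only step with real content is the closed-form evaluation of $\sum_{n\ge1}f^y_nz^n$; everything after it is formal manipulation of power series over the commutative ring $\Lambda^y[q]$. Within that step the delicate point is tracking the two sign factors and the exponent of $q$ through the change of summation variables, but this is routine, and the Hall--Littlewood route noted above sidesteps most of it.
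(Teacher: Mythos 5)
Your proof is correct --- I checked the hook expansion $s^y_{(a,1^b)}=\sum_{j=0}^{b}(-1)^jh^y_{a+j}e^y_{b-j}$, the change of variables (signs, $q$-exponents, the range $0\le j\le m-1$), the rewriting $qH(z)-H(qz)=(q-1)\bigl(1-\sum_{m\ge2}(\sum_{k=1}^{m-1}q^k)h^y_mz^m\bigr)$, and the coefficient extraction, where the bare $h^y_{n+1}$ from $H(z)$ correctly merges with the $m=n+1$ term to give $s^y_{(n+1)}\sum_{i=0}^nq^i$ --- but it takes a different route from the paper, which in fact gives no proof of this statement at all: the recursion is quoted from Procesi, whose own argument is independent, via the toric geometry of $X(A_{n-1})$, and the passage from the recursion to the closed generating function (Corollary~\ref{cor:ssdv}) is likewise attributed to \cite{DV, Sta, Ste}. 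The paper's Appendix~\ref{sec:procesi} only runs a consistency check: its proposition shows, via Macdonald's Hall--Littlewood identity, that the closed form of Corollary~\ref{cor:ssdv} agrees with equation~\eqref{eq:inverse}. You run this in reverse: starting from \eqref{eq:inverse} (a consequence of Theorem~\ref{thm:formula}, whose stratification proof does not use Procesi, so there is no circularity), you establish the same key identity $\sum_n f^y_nz^n=\frac{1}{q-1}\bigl(\frac{H(qz)}{H(z)}-1\bigr)$ --- your direct hook-Schur computation is an alternative to the paper's citation of \cite[pp.~209--210]{Mac}, and your Hall--Littlewood aside is exactly the paper's route --- and then you extract the recursion by clearing denominators and comparing coefficients, a step the paper leaves to the literature. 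What your approach buys is a self-contained derivation of Procesi's recursion within the paper's framework; what it costs is the logical role of the appendix, since deriving Procesi's formula from the main theorem no longer constitutes an independent verification of that theorem. One cosmetic point: the paper writes $H(t)=\sum_{r\ge1}h_rt^r$, which must be read as $\sum_{r\ge0}$ (otherwise Corollary~\ref{cor:ssdv} fails at order $t^0$); your convention $H(z)=\sum_{m\ge0}h^y_mz^m$ is the correct one and is what makes your manipulations valid.
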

As a corollary, we have the following formula which is obtained in \cite{DV, Sta, Ste}.
\begin{corollary}\label{cor:ssdv}
We have
\begin{equation*} 
1+\sum_{n=1}^{\infty}E_{\Sg_n}(q) t^n
=\frac{(1-q)H(t)}{H(qt)-qH(t)}~,
\end{equation*}
where $H(t)=\sum_{r \geq 1} h_r t^r$ is the generating function of the complete symmetric functions in the variable $y$.
\end{corollary}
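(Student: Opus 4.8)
The goal is to prove Corollary~\ref{cor:ssdv}, deriving the closed generating function from Procesi's recursion. I will translate the recursion into an identity of generating series and then solve a linear equation for the unknown series.

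\medskip

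\noindent\textbf{Setup.} Put $E(t):=1+\sum_{n\geq 1}E_{\Sg_n}(q)t^n$ and recall $H(t)=\sum_{r\geq1}h_r t^r$, noting $s^y_{(r)}=h_r$ for all $r\geq1$. First I would rewrite Procesi's recursion so that both sums over $i$ start at the same index. Using $E_{\Sg_1}(q)=1$ (the space $\overline M_{0,2|1}$ is a point) and $s^y_{(1)}\cdot E_{\Sg_1}(q)\cdot(\sum_{k=1}^{0}q^k)=0$, the term $i=0$ contributes nothing to the second sum, and similarly for $i=n-1$, so the recursion for $n\geq0$ can be written uniformly as
\begin{equation*}
E_{\Sg_{n+1}}(q)=\sum_{i=0}^{n} s^y_{(n+1-i)}\,E_{\Sg_{i+1}}(q)\,\Bigl(\sum_{k=0}^{n-i}q^k\Bigr)-\sum_{i=0}^{n} s^y_{(n+1-i)}\,E_{\Sg_{i+1}}(q)\,q^{\,n-i}\cdot(\text{correction}),
\end{equation*}
or more cleanly: the inner factor $\sum_{k=1}^{n-i-1}q^k$ differs from $\sum_{k=0}^{n-i}q^k=\frac{1-q^{n-i+1}}{1-q}$ by $1+q^{n-i}$, and the $i=n$ and ``leading'' pieces are exactly accounted for by the standalone term $s^y_{(n+1)}\sum_{i=0}^nq^i$. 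The bookkeeping here is the first step to get right.

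\medskip

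\noindent\textbf{Generating-function identity.} Multiplying the recursion by $t^{n+1}$ and summing over $n\geq0$, the left side becomes $E(t)-1$. On the right, a convolution of the form $\sum_{i}s^y_{(n+1-i)}E_{\Sg_{i+1}}(q)q^{\text{(something)}}$ becomes a product of the series $H(\cdot)$ evaluated at $t$ or $qt$ with $\bigl(E(t)-1\bigr)$ or $E(t)$, after carefully matching the powers of $t$ absorbed by each index. Concretely, I expect the factor $\frac{1}{1-q}\sum_{i=0}^{n}s^y_{(n+1-i)}E_{\Sg_{i+1}}(q)(1-q^{n-i+1})t^{n+1}$ to produce $\frac{1}{1-q}\bigl(H(t)E(t)-q\,H(qt)E(t)\bigr)$ up to explicit correction terms, and the standalone term $\sum_{n}s^y_{(n+1)}(\sum_{i=0}^n q^i)t^{n+1}=\frac{1}{1-q}\bigl(H(t)-H(qt)\bigr)$. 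Collecting everything yields a \emph{linear} equation $A(t)\,E(t)=B(t)$ with $A,B$ explicit rational combinations of $H(t)$ and $H(qt)$; solving gives $E(t)=\dfrac{(1-q)H(t)}{H(qt)-qH(t)}$ after simplification. A useful sanity check at the end is to set $q=1$: the formula must reduce to $E(t)|_{q=1}=\bigl(1-\sum_{n\geq1}p_n^y z^n\bigr)^{-1}$-type behaviour consistent with the Euler-characteristic formula in Section~\ref{sec:gener}, and one can cross-check the constant and linear terms in $t$ directly against $E_{\Sg_1}=1$, $E_{\Sg_2}(q)=s^y_{(2)}+s^y_{(1^2)}q$ (equivalently, comparing with \eqref{eq:inverse}).

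\medskip

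\noindent\textbf{Main obstacle.} The genuine difficulty is not conceptual but combinatorial: correctly reindexing the two truncated geometric sums $\sum_{k=1}^{n-i-1}q^k$ and the standalone $\sum_{i=0}^nq^i$ so that \emph{all} boundary contributions (the $i=0$, $i=n-1$, $i=n$ terms, and the mismatch between $\sum_{k=1}^{n-i-1}$ and a full geometric series) cancel or combine into the clean product form. One must also handle the shift between $E(t)$ and $E(t)-1$ carefully, since the convolution variable ranges include $i+1=1$. I would do this by first verifying the identity as formal power series up to order $t^3$ or $t^4$ to fix all signs and offsets, and only then write the general manipulation. Once the linear equation is obtained, deducing the closed form is immediate algebra.
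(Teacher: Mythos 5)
Your strategy---turning Procesi's recursion into a linear equation for $E(t):=1+\sum_{n\geq 1}E_{\Sg_n}(q)t^n$ and solving it---is sound and does give the stated formula. The clean bookkeeping is: the standalone term contributes $\sum_{n\geq 0}h_{n+1}\tfrac{1-q^{n+1}}{1-q}\,t^{n+1}=\tfrac{H(t)-H(qt)}{1-q}$, while substituting $j=i+1$, $m=n-i$ (so $j\geq 1$, $m\geq 2$, $j+m=n+1$) turns the double sum into
\begin{equation*}
\frac{1}{1-q}\Bigl(\sum_{m\geq 2}h_m(q-q^m)t^m\Bigr)\bigl(E(t)-1\bigr)
=\frac{qH(t)-H(qt)+1-q}{1-q}\,\bigl(E(t)-1\bigr),
\end{equation*}
whence $\bigl(E(t)-1\bigr)\tfrac{H(qt)-qH(t)}{1-q}=\tfrac{H(t)-H(qt)}{1-q}$ and the claimed closed form follows; note this requires $h_0=1$ to be included in $H(t)$ (the ``$r\geq 1$'' in the statement is a typo, as the appendix itself uses $H(t)=\prod_i(1-ty_i)^{-1}$). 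This is, however, a genuinely different route from what the paper does: the paper never derives the corollary from the recursion, but quotes it from \cite{DV,Sta,Ste}, and what it actually proves (the Proposition in Appendix~\ref{sec:procesi}) is that the closed form equals $\bigl(1-\sum_n f_n^y\bigr)^{-1}$ via the Hall--Littlewood identity \eqref{eq:hook}; combined with \eqref{eq:inverse} this makes the paper's own main theorem an independent proof of the corollary, presented there as a consistency check. Your derivation buys a self-contained deduction from Procesi's theorem alone; the paper's route buys the comparison between its new computation and the classical one.

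Two slips in your intermediate step should be repaired, though they do not affect the final algebra. First, it is the boundary terms $i=n-1$ (and $i=n$) of the second sum that vanish, not $i=0$: the $i=0$ term is $s^y_{(n)}E_{\Sg_1}(q)\sum_{k=1}^{n-1}q^k$, which is nonzero for $n\geq 2$. Second, your displayed ``uniform'' recursion pairs $s^y_{(n+1-i)}$ with $E_{\Sg_{i+1}}(q)$, whose total degree is $n+2$ rather than $n+1$, so the convolution it suggests (producing $H(t)E(t)-qH(qt)E(t)$) is mis-shifted; with the substitution indicated above these corrections disappear and the linear equation comes out exactly as you anticipated.
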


\subsection{Equivalence}
The following proposition shows the equivalence between our result and Procesi's by comparing Equation~\eqref{eq:inverse} and Equation~\eqref{eq:hook} to Corollary \ref{cor:ssdv}. 
\begin{proposition}
We have
\begin{equation*}
\frac{(1-q)H(t)}{H(qt)-qH(t)}
=\left\{
1-\sum_{r=1}^{\infty} q^{-1} P^{y}_{(r)} (q^{-1}) (qt)^r
\right\}^{-1}~.
\end{equation*}
\end{proposition}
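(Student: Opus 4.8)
The plan is to establish the identity by matching the two sides after a standard change of variables, reducing everything to a known generating-function identity for the Hall--Littlewood functions $P^y_{(r)}$. First I would rewrite the right-hand side: the series $\sum_{r\geq 1} q^{-1} P^y_{(r)}(q^{-1})(qt)^r$ is, by the very definition of $f^y_r$ in the excerpt together with \eqref{eq:hook}, nothing but $\sum_{r\geq 1} f^y_r t^r$ after the substitution $q \mapsto q$ is tracked carefully; indeed $f^y_n = q^{n-1}P^y_{(n)}(q^{-1})$, so $q^{-1}P^y_{(r)}(q^{-1})(qt)^r = q^{r-1}P^y_{(r)}(q^{-1}) t^r = f^y_r t^r$. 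Hence the right-hand side is exactly $\bigl(1-\sum_{r\geq 1} f^y_r t^r\bigr)^{-1}$, which by Corollary~\ref{cor:ssdv} (equivalently Equation~\eqref{eq:inverse} read with the bookkeeping variable $t$) must be shown to equal $\dfrac{(1-q)H(t)}{H(qt)-qH(t)}$. So the proposition collapses to the single generating-function identity
\begin{equation*}
1-\sum_{r=1}^{\infty} f^y_r t^r = \frac{H(qt)-qH(t)}{(1-q)H(t)}.
\end{equation*}

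Next I would prove this last identity directly. Using \eqref{eq:hook} one has $\sum_{r\geq 1} f^y_r t^r = \sum_{r\geq 1} t^r \sum_{i=0}^{r-1}(-1)^i q^{r-1-i} s^y_{(r-i,1^i)}$. The key classical input is the generating function for hook Schur functions: $\sum_{r\geq 1}\sum_{i=0}^{r-1}(-1)^i s^y_{(r-i,1^i)} u^{r-i} v^{i}$ factors through the identity $H(u)E(v) = \sum_{r,i} s^y_{(r-i,1^i)}\,(\text{signs})$ — more precisely, the Jacobi--Trudi / dual-Cauchy expansion gives $\sum_{i\geq 0}(-1)^i s^y_{(r-i,1^i)} = $ a coefficient extracted from $H(t)E(-t)^{-1}$-type products. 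Concretely I expect to show
\begin{equation*}
1-\sum_{r=1}^\infty f^y_r t^r = \frac{H(qt)}{H(t)}\cdot\frac{1}{1-q}\cdot\frac{1}{\text{(something)}} ,
\end{equation*}
and the cleanest route is to verify instead the equivalent polynomial identity $H(qt)-qH(t) = (1-q)H(t)\bigl(1-\sum_{r\geq1} f^y_r t^r\bigr)$ by comparing coefficients of $t^N$ on both sides, where the left side contributes $(q-q)h_0 = 0$ at $N=0$ and $(q^N - q)h_N$ for $N\geq 1$, while the right side's coefficient involves $\sum_{j} h_{N-j}\cdot[\text{coeff of }t^j\text{ in }(1-q)(1-\sum f^y_r t^r)]$.

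The main obstacle, and where the real content lies, is identifying $\sum_{r\geq 1} f^y_r t^r$ in closed form in terms of $H$. I would handle this by invoking the known specialization of Hall--Littlewood functions: the generating identity $\sum_{r\geq 0} P^y_{(r)}(q)\,t^r = \prod_i \frac{1-q y_i t}{1-y_i t} = \dfrac{H(t)}{H(qt)}\Big|_{\text{appropriately interpreted}}$ — this is the rank-one case of the Hall--Littlewood Cauchy-type formula and is exactly \cite[III.2]{Mac}. Substituting $q\mapsto q^{-1}$ and rescaling $t\mapsto qt$ converts $\sum P^y_{(r)}(q^{-1})(qt)^r$ into $q\sum f^y_r t^r + q$ (absorbing the $r=0$ term), and the quotient $\prod_i\frac{1-y_i t/?}{1-y_i t\cdot ?}$ becomes precisely $\dfrac{qH(t) - H(qt)}{(q-1)H(t)}$ after simplification. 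Once that substitution is carried out carefully — keeping track of which variable is inverted and the resulting shift in the product $\prod_i\frac{1-q^{-1}y_i(qt)}{1-y_i(qt)} = \prod_i\frac{1-y_i t}{1-q y_i t} = \dfrac{H(qt)^{-1}}{H(t)^{-1}}$ wait, $= \dfrac{H(t)}{H(qt)}$ inverted appropriately — the proposition follows by pure algebra. So the proof is: (1) reduce RHS to $(1-\sum f^y_r t^r)^{-1}$ via \eqref{eq:hook}; (2) apply the rank-one Hall--Littlewood generating identity from \cite{Mac} to get $\sum_{r\geq 0}P^y_{(r)}(u)t^r = \prod_i \frac{1-uy_it}{1-y_it}$; (3) specialize $u = q^{-1}$, $t\mapsto qt$, and simplify the resulting product of $(1-y_it)/(1-qy_it)$ against $H(qt)/H(t)$ to land on $\frac{(1-q)H(t)}{H(qt)-qH(t)}$. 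The only genuinely delicate point is bookkeeping the $r=0$ term and the sign $(1-q)$ versus $(q-1)$, which amounts to checking the $t^0$ and $t^1$ coefficients by hand.
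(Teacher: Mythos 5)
Your overall route is the same as the paper's: rewrite the right-hand side as $\bigl(1-\sum_{r\ge 1}f^y_r t^r\bigr)^{-1}$ via \eqref{eq:hook} (this reduction, your step (1), is correct), and then invoke the rank-one Hall--Littlewood generating function from \cite[III.2]{Mac} specialized at $u=q^{-1}$ in the variable $qt$. The problem is that the classical identity you quote is misstated, and the missing piece is not the $r=0$ term or a sign: the correct statement carries a prefactor $(1-u)$,
\begin{equation*}
\prod_i\frac{1-uy_it}{1-y_it}\;=\;1+(1-u)\sum_{r\ge 1}P^y_{(r)}(u)\,t^r,
\qquad\text{so in particular}\qquad
\frac{H(qt)}{H(t)}=1+(1-q^{-1})\sum_{r\ge 1}P^y_{(r)}(q^{-1})(qt)^r,
\end{equation*}
which is exactly the identity the paper uses; from it one gets $1-\sum_r f^y_r t^r=\frac{qH(t)-H(qt)}{(q-1)H(t)}=\frac{H(qt)-qH(t)}{(1-q)H(t)}$ and the proposition follows by inverting. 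With the identity as you wrote it, $\sum_{r\ge 0}P^y_{(r)}(u)t^r=\prod_i\frac{1-uy_it}{1-y_it}$, the same substitution yields $1+q\sum_{r\ge1}f^y_rt^r=\frac{H(qt)}{H(t)}$, hence $1-\sum_r f^y_r t^r=\frac{(1+q)H(t)-H(qt)}{qH(t)}$, which is not the required $\frac{H(qt)-qH(t)}{(1-q)H(t)}$ (compare coefficients of $t$: $-h_1$ versus $\frac{1-q}{q}h_1$). So step (3) fails as written; your proposed low-order check would indeed expose the discrepancy, but repairing it is not a matter of adjusting $(1-q)$ versus $(q-1)$ or the constant term --- it requires quoting the Macdonald identity with its $(1-u)$ factor (equivalently, remembering $q_r=(1-q)P_{(r)}$). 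Two smaller points: the intermediate claim that the substitution turns $\sum_{r\ge 0}P^y_{(r)}(q^{-1})(qt)^r$ into $q\sum_r f^y_r t^r+q$ should read $1+q\sum_r f^y_r t^r$; and the detour through hook Schur generating functions ($H(u)E(v)$, dual Cauchy) is vague and unnecessary once the Hall--Littlewood identity is stated correctly, since everything after that is the same elementary manipulation the paper performs.
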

\begin{proof} 
As in \cite[pp. 209--210]{Mac}, we have
\begin{multline*}
\frac{H(qt)}{H(t)}
=\prod_{i \geq 1} \frac{1-ty_i}{1-qty_i}  =1+ (1-q^{-1})\sum_{i=1}^n \frac{y_i qt}{1-y_i qt}
\prod_{j: j\neq i} \frac{y_i-q^{-1}y_j}{y_i-y_j}=\\
= 1+(1-q^{-1})\sum_{r=1}^{\infty}
P^{y}_{(r)}(q^{-1}) (qt)^r~.
\end{multline*}
An easy manipulation of this formula gives the wanted equality. \end{proof}

\section{$E_{\Sg_2 \times \Sg_n}(q)$ for $n$ up to $6$} \label{sec:appendix}
$$
\begin{tabular}{c|l}
\hline
$n$& $E_{\Sg_2 \times \Sg_n}(q)$\\
\hline
$1$& $s_{(2)}^x s_{(1)}^y$ \\
\hline
$2$& $(q+1) s_{(2)}^x s_{(2)}^y$ \\
\hline
$3$& $s_{(2)}^x \Bigl((q^2+q+1) s_{(3)}^y+ q \, s_{(2,1)}^y\Bigr)+q \, s^x_{(1^2)} s_{(3)}^y$\\
\hline
$4$ &$s_{(2)}^x \Bigl((q^3+2 q^2+2 q+1) s_{(4)}^y+(q^2+q) s_{(3,1)}^y+(q^2+q) s_{(2^2)}^y\Bigr)$\\
&$+s^x_{(1^2)} \Bigl((q^2+q) s_{(4)}^y+(q^2+q) s_{(3,1)}^y\Bigr)$\\
\hline
$5$ & $s_{(2)}^x \Bigl((q^4+2 q^3+4 q^2+2 q+1) s_{(5)}^y+(2 q^3+3 q^2+2 q) s_{(4,1)}^y$\\
&\qquad $+(q^3+3 q^2+q) s_{(3,2)}^y+q^2 \, s_{(2^2,1)}^y \Bigr)$\\
& $+s^x_{(1^2)} \Bigl((2 q^3+2 q^2+2 q)s_{(5)}^y+(q^3+3 q^2+q) s_{(4,1)}^y+(q^3+2 q^2+q) s_{(3,2)}^y+q^2 \, s_{(3,1^2)}^y \Bigr)$\\
\hline
$6$ &$s_{(2)}^x \Bigl((q^5+3 q^4+6 q^3+6 q^2+3 q+1) s_{(6)}^y+(2 q^4+6 q^3+6 q^2+2 q) s_{(5,1)}^y$\\
&\qquad$+(2 q^4+7 q^3+7 q^2+2 q) s_{(4,2)}^y+(q^3+q^2) s_{(4,1^2)}^y+(2 q^3+2 q^2) s_{(3^2)}^y$\\
&\qquad$+(2 q^3+2 q^2) s_{(3,2,1)}^y+(q^3+q^2) s_{(2^3)}^y\Bigr)$ \\
&$+s^x_{(1^2)}  \Bigl((2 q^4+4 q^3+4 q^2+2 q) s_{(6)}^y+(2 q^4+6 q^3+6 q^2+2 q) s_{(5,1)}^y$\\
&\qquad$+(q^4+5 q^3+5 q^2+q) s_{(4,2)}^y+(2 q^3+2 q^2) s_{(4,1^2)}^y$\\
&\qquad$+(q^4+3 q^3+3 q^2+q) s_{(3^2)}^y+(2 q^3+2 q^2) s_{(3,2,1)}^y \Bigr)$\\
\hline
\end{tabular}
$$


\end{document}